\title{Non-compact form of the Elementary Discrete Invariant}
\subjclass[2010]{14C25 ;  11E39.}
\author{Rapha\"{e}l Fino}
\address
{Instituto de Matem\'{a}ticas \\
Ciudad Universitaria\\
UNAM\\
DF 04510, M\'{e}xico}
\address
{{\it Web page:}
{\tt http://www.matem.unam.mx/fino}}
\email {fino {\it at} im.unam.mx}
\numberwithin{equation}{section}
\theoremstyle{definition}
\newtheorem{defi}[equation]{Definition}
\newtheorem{rem}[equation]{Remark}
\newtheorem{lemme}[equation]{Lemma}
\newtheorem{prop}[equation]{Proposition}
\newtheorem{thm}[equation]{Theorem}
\newtheorem{cor}[equation]{Corollary}
\begin{document}

\begin{abstract}
We determine the non-compact form of Vishik's Elementary Discrete Invariant for quadrics.

\smallskip
\noindent \textbf{Keywords:} Chow groups, quadratic forms, grassmannians.
\end{abstract}

\maketitle

\tableofcontents

\section{Introduction}

Let $X$ be a smooth projective quadric 
of dimension $n$ 
over a field $F$ associated with a non-degenerate $F$-quadratic form $q$.
The \emph{splitting pattern} of $X$ is a discrete
 invariant which measures what are the possible Witt indices
of $q_E$ over all field extensions $E/F$ (see \cite{HR} and \cite{Kne}).
On the other hand, the \emph{motivic decomposition type} of $X$ is a discrete
 invariant which measures in what pieces the Chow motive of $X$
can be decomposed. 
Moreover, Alexander Vishik noticed in \cite{Vi04} that the study of the interaction between these two invariants
provides further information about both of them.

For this reason, he introduced the \emph{Generic Discrete Invariant} of quadrics, a bigger discrete invariant containing the splitting pattern and the motivic decomposition type invariants as faces, see \cite{Grass} and \cite{ViICTP}.
The Generic Discrete Invariant $GDI(X)$ is defined as follows.
Let $K/F$ be a splitting field extension of $q$. Let us denote $[n/2]$ as $d$. 
For any $i\in \{0,\dots, d\}$, we write $G_i$ for the grassmannian 
of $i$-dimensional totally $q$-isotropic subspaces (in particular $G_0$ is the quadric $X$).
Then $GDI(X)$ is the collection of the subalgebras of rational elements
\[\overline{\text{Ch}}^{\ast}(G_i):=\text{Image}\left(\text{Ch}^{\ast}({G_i})\rightarrow \text{Ch}^{\ast}({G_i}_{K})\right)\]
for $i\in \{0,\dots, d\}$, where $\text{Ch}$ stands for the Chow ring
with $\mathbb{Z}/2\mathbb{Z}$-coefficients (an algebraic cycle already defined at the level of the base field $F$ is called \emph{rational}).

In his paper \cite{uINV} dedicated to the Kaplansky's conjecture on
the $u$-invariant of a field, A.\,Vishik 
used the \textit{Elementary Discrete Invariant} of quadrics, a handier invariant than the $GDI$ as it only deals with
some particular cycles in $\text{Ch}^{\ast}({G_i}_{K})$.
More precisely, 
for any $i\in \{0,\dots, d\}$, 
we denote by $\mathcal{F}(0,i)$ the partial orthogonal flag variety of $q$-isotropic lines contained in 
$i$-dimensional totally $q$-isotropic subspaces. 
One can consider the diagram
\[\xymatrix{
X & \mathcal{F}(0,i) \ar[l]^{\pi_{(\underline{0},i)}} \ar[r]_{\pi_{(0,\underline{i})}} & G_i,
}\]
given by the natural projections
and, for $0\leq j \leq d$, we set
\[Z^i_{n-i-j}:={\pi_{(0,\underline{i})}}_{\ast} \circ \pi_{(\underline{0},i)}^{\ast}(l_{j})\in \text{CH}^{n-i-j}({G_i}_{K}),\]
where $\text{CH}$ stands for the Chow ring
with $\mathbb{Z}$-coefficients and $l_j$ is the class in $\text{CH}_j(X_K)$ of a $j$-dimensional totally isotropic subspace
of $\mathbb{P}\left((V_q)_K\right)$ (with $V_q$ the $F$-vector space associated with $q$).
We set $z^i_{n-i-j}:=Z^i_{n-i-j}\;(\text{mod}\;2) \in \text{Ch}^{n-i-j}({G_i}_{K})$, with $\text{Ch}$ the Chow ring
with $\mathbb{Z}/2\mathbb{Z}$-coefficients. 
The cycles $z^i_{n-i-j}$ are the elementary classes defining
the Elementary Discrete Invariant $EDI(X)$:

\begin{defi}
The \emph{Elementary Discrete Invariant} $EDI(X)$ is the collection of subsets $EDI(X,i)$
consisting of those integers $m$ such that $z^i_m$ is rational.
\end{defi}

Furthermore, for any $r\geq 1$, the Chow motive of $X^r$ with $\mathbb{Z}/2\mathbb{Z}$-coefficients
decomposes into a direct sum of shifts of the motive of some $G_i$, see \cite[Corollary 91.8]{EKM}.
Therefore, to know $GDI(X)$
is the same as to know
\[\overline{\text{Ch}}^{\ast}(X^r):=\text{Image}\left(\text{Ch}^{\ast}(X^r)\rightarrow \text{Ch}^{\ast}(X^r_K)\right)\]
for all $r\geq 1$. Hence, the collection of the latter
subalgebras constitutes a \emph{non-compact}
(in the sense that one has to consider infinitely many objects)
form of $GDI(X)$. For the same reason, there exists a non-compact form of $EDI(X)$
(with defining cycles living in $\text{Ch}^{\ast}(X^r_K)$), which we determine in the current note:
for any $i\in \{0,\dots, d\}$, let us denote by 
$\text{sym}: \text{CH}^{\ast}(X^{i+1})\rightarrow \text{CH}^{\ast}(X^{i+1})$
the homomorphism $\Sigma_{s \in S_{i+1}}s_{\ast}$, where
$s:X^{i+1}\rightarrow X^{i+1}$ is the isomorphism associated with
a permutation $s$. For $0\leq j \leq d$, we set 
\[\rho_{i,j}:=\text{sym}\left((\times_{k=0}^{i-1}h^k)\times l_j\right)\in \text{CH}^{n-j+i(i-1)/2}(X_{K}^{i+1}),\]
where $\times$ is the external product
and $h^k$ is the $k$-th power of the hyperplane section
class $h\in \text{CH}^1(X)$ (always rational).
Note that $\rho_{0,j}=Z^0_{n-j}=l_j$.
The symmetric cycles $\rho_{i,j}\;(\text{mod}\;2)$ are the classes defining the non-compact form of $EDI(X)$:

\begin{thm}
 \textit{Let $1\leq i \leq d $ and $0\leq j \leq d$. The cycle $z^{i}_{n-i-j}$ is rational if and only if the cycle} $\rho_{i,j}\;(\text{mod}\;2)$ \textit{is rational.}
\end{thm}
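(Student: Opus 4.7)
The plan is to transport each of the two cycles into a rational image of the other under correspondences defined over $F$, so that rationality is preserved in both directions. The natural correspondence is given by the incidence variety
\[Y_i\;:=\;\underbrace{\mathcal{F}(0,i)\times_{G_i}\cdots\times_{G_i}\mathcal{F}(0,i)}_{i+1\text{ factors}},\]
which parametrizes ordered $(i+1)$-tuples $(\ell_0,\ldots,\ell_i)$ of isotropic lines contained in a common isotropic $i$-plane $W$. It carries two natural projections, both defined over $F$: the projection $p:Y_i\to X^{i+1}$ sending $(\ell_0,\ldots,\ell_i,W)\mapsto(\ell_0,\ldots,\ell_i)$ and the projection $q:Y_i\to G_i$ sending this point to $W$. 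The morphism $q$ is a Zariski-locally trivial $(\mathbb{P}^i)^{i+1}$-bundle, whose Chow-theoretic behaviour is controlled by the Leray--Hirsch theorem in terms of the $i+1$ tautological relative hyperplane classes; the morphism $p$ is proper and birational onto its image.

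I would proceed by induction on $i$, the case $i=0$ being trivial via $\rho_{0,j}=l_j=z^0_{n-j}$. For the implication ``$\rho_{i,j}$ rational $\Rightarrow$ $z^i_{n-i-j}$ rational'', the class $q_{\ast}p^{\ast}(\rho_{i,j})\in\overline{\text{Ch}}^{\ast}((G_i)_K)$ is automatically rational. Expanding $\rho_{i,j}$ as a sum over $s\in S_{i+1}$ and combining the projection formula with the Leray--Hirsch description of $q$, this class is identified with $z^i_{n-i-j}$ plus a linear combination mod $2$ of cycles $z^i_{m}$ for $m$ strictly larger than $n-i-j$, together with cycles pulled back from $G_{i'}$ with $i'<i$ along standard rational correspondences. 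A secondary induction on $j$ (equivalently, on the codimension) handles the cycles $z^i_m$ with $m>n-i-j$, while the outer induction on $i$ absorbs the $G_{i'}$-contributions, yielding the rationality of $z^i_{n-i-j}$.

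For the reverse implication, I would compute $p_{\ast}\bigl(\xi\cdot q^{\ast}(z^i_{n-i-j})\bigr)$ for a well-chosen rational class $\xi\in\text{Ch}^{\ast}(Y_i)$ built from the tautological classes of the bundle $q$. The projection formula together with the defining formula $z^i_{n-i-j}=\pi_{(0,\underline{i})\ast}\pi_{(\underline{0},i)}^{\ast}(l_j)$ expands this class as a sum of external products of the form $h^{a_0}\times\cdots\times h^{a_{i-1}}\times l_j$ together with their permutations under $S_{i+1}$, which assembles, for the appropriate $\xi$, into $\rho_{i,j}$ modulo symmetric cycles of the form $\rho_{i',j'}$ with $i'<i$. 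These lower-order cycles are rational by the inductive hypothesis on $i$, so $\rho_{i,j}$ is itself rational.

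The main obstacle is the combinatorial bookkeeping in the two push-pull computations, in particular matching the scalars mod $2$ arising from the $(i+1)!$ summands of the symmetrization and confirming the triangular form of the correction terms with respect to a suitable partial order on the natural basis of $\overline{\text{Ch}}^{\ast}$. Once this triangular structure is established, the double induction on $(i,j)$ closes the argument.
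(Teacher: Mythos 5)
Your incidence variety $Y_i$ is precisely the cycle $\theta_i=[\{(y,x_1,\dots,x_{i+1})\mid x_k\in y\}]\subset G_i\times X^{i+1}$ on which the paper's proof is built, so the skeleton of your plan is the right one; but both push--pull computations are off in ways that matter. In the direction ``$\rho_{i,j}$ rational $\Rightarrow z^i_{n-i-j}$ rational'' the class $q_{\ast}p^{\ast}(\rho_{i,j})$ is in fact $0$, not $z^i_{n-i-j}$ plus corrections: for $a<i$ the class $\pi_{(0,\underline{i})\ast}\pi_{(\underline{0},i)}^{\ast}(h^{a})$ lies in negative codimension, hence vanishes, and every monomial of $\rho_{i,j}$ contributes a factor of this kind. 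You must first multiply by the rational class $p^{\ast}(h^{i}\times h^{i-1}\times\cdots\times h\times 1)$; then a dimension count shows the push-forward is exactly $[G_i]^{\times i}\times Z^i_{n-i-j}$ with \emph{no} correction terms (this is the paper's Proposition 3.13). That is fortunate, because the inductions you propose to absorb corrections would not close: your induction hypothesis is an equivalence for smaller parameters, not the rationality of the auxiliary cycles $z^i_m$, $m>n-i-j$, or of cycles coming from $G_{i'}$, so ``handling'' them by induction on $j$ or $i$ is not available.

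The genuine gap is in the hard direction ``$z^i_{n-i-j}$ rational $\Rightarrow \rho_{i,j}$ rational''. You assert that $p_{\ast}(\xi\cdot q^{\ast}(z^i_{n-i-j}))$ equals $\rho_{i,j}$ modulo cycles $\rho_{i',j'}$ with $i'<i$, rational by induction. This cannot be right as stated ($\rho_{i',j'}$ lives on $X^{i'+1}$), and the true correction terms are of a different nature: for $j\le i-1$ one has $(\theta_i)_{\ast}(z^i_{n-i-j})=\rho_{i,j}+\sum_{m\ge i}\operatorname{sym}\bigl((\times_{k\ne j}h^k)\times h^m\times l_m\bigr)+(\text{nonessential})$, and these extra \emph{essential} symmetric classes are not rational by any induction hypothesis --- their rationality is a separate theorem (the paper's Lemma 2.1), proved by assembling them from external powers of the diagonal class of $X$. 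Your sketch supplies neither the identification of the coefficients of $p_{\ast}q^{\ast}(Z^i_{n-i-j})$ on the basis of $\operatorname{Ch}(X^{i+1}_K)$ --- which the paper obtains by a delicate descending induction on $i$, composing correspondences and invoking the Whitney sum formula for the tautological bundle on $G_{i-1}$ (Lemmas 2.4, 2.5 and Propositions 3.1, 3.10) --- nor the rationality of the correction terms. Without the latter, the ``triangular structure'' you invoke gives no conclusion about $\rho_{i,j}$ itself.
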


Theorem 1.2 reduces certain questions about rationality of algebraic cycles on orthogonal grassmannians to the sole level of quadrics.
For example, it allows one to reformulate both Vishik's conjecture \cite[Conjecture 3.11]{GPQ}
and the conjecture \cite[Conjecture 0.13]{ViICTP} on the \emph{dimensions of Bruno Kahn}.

\medskip

In Section 2, we introduce some basic tools which are required in Section 3, where we prove
Theorem 1.2, using mainly compositions of correspondences and 
Chern classes of vector bundles over orthogonal grassmannians.

\section{Preliminaries}

In this section, we continue to use notation introduced in Section\,1.

\subsection{Rational cycles on powers of quadrics}

We refer to \cite[\S 68]{EKM} for an introduction to cycles on powers of quadrics.
For any $1\leq i \leq d$ and $0\leq j \leq i-1$, we set
\[\Delta_{i,j}:=\text{sym}\left((\times_{k=0}^{i-1}h^k)\times l_j\right)
+\sum_{m=i}^d\text{sym}\left((\times_{\substack{ k=0\\ k\neq j  }}^{i-1}h^k)\times h^m \times l_m\right)\]
in $\text{Ch}^{n-j+i(i-1)/2}(X_{K}^{i+1})$.
If $n=2d$, we choose an orientation $l_d$ of the quadric.

\begin{lemme}
\textit{For any $1\leq i \leq d$ and $0\leq j \leq i-1$, the cycle} $\Delta_{i,j}$
\textit{is rational.}
\end{lemme}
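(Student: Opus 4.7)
\emph{Proof proposal.} The plan is to construct an explicit rational cycle on $X^{i+1}$ whose expansion modulo $2$ equals $\Delta_{i,j}$, built from the two obvious rational classes: the hyperplane class $h\in\text{Ch}^1(X)$ and the diagonal $\Delta_X\in\text{Ch}^n(X\times X)$.

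Label the factors of $X^{i+1}$ by $0,1,\dots,i$, write $\pi_{j,i}\colon X^{i+1}\to X^2$ for the projection onto factors $j$ and $i$, and $p_k\colon X^{i+1}\to X$ for the $k$-th factor projection. Set
\[R_j:=\pi_{j,i}^{\ast}(\Delta_X)\cdot\prod_{k\in\{0,\dots,i-1\}\setminus\{j\}}p_k^{\ast}(h^k)\in\text{Ch}^{n+i(i-1)/2-j}(X^{i+1}),\]
a product of rational classes, and hence rational. The transposition $\tau=(j\;i)\in S_{i+1}$ preserves $R_j$ (the diagonal is symmetric, and the remaining $p_k^{\ast}(h^k)$ live on factors not moved by $\tau$), so writing $H=\langle\tau\rangle$, the class $\sigma_{\ast}R_j$ depends only on the coset $\sigma H\in S_{i+1}/H$, and
\[\widetilde R_j:=\sum_{[\sigma]\in S_{i+1}/H}\sigma_{\ast}R_j\]
is a well-defined rational cycle on $X^{i+1}$.

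To identify $\widetilde R_j$ with $\Delta_{i,j}$, I use the expansion $\Delta_X=\sum_{m=0}^{d}(h^m\times l_m+l_m\times h^m)$ in $\text{Ch}^n(X_K^2)$ (with a middle-degree adjustment when $n=2d$; see below). Writing $R_j=\sum_m(B_m+\tau_{\ast}B_m)$ with
\[B_m:=p_j^{\ast}(h^m)\cdot p_i^{\ast}(l_m)\cdot\prod_{k\in\{0,\dots,i-1\}\setminus\{j\}}p_k^{\ast}(h^k),\]
I observe that $\sigma_{\ast}(B_m+\tau_{\ast}B_m)=\sigma_{\ast}B_m+(\sigma\tau)_{\ast}B_m$, and as $[\sigma]$ runs over $S_{i+1}/H$ the pairs $\{\sigma,\sigma\tau\}$ jointly exhaust $S_{i+1}$ exactly once, yielding $\widetilde R_j=\sum_{m=0}^{d}\text{sym}(B_m)$. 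The proof concludes by a case split on $m$: (i) if $m=j$, the $i+1$ classes appearing in $B_j$ are pairwise distinct, the stabilizer of $B_j$ in $S_{i+1}$ is trivial, and $\text{sym}(B_j)$ reproduces the main term $\text{sym}\bigl((\times_{k=0}^{i-1}h^k)\times l_j\bigr)$; (ii) if $m\geq i$, the classes of $B_m$ are again pairwise distinct, and $\text{sym}(B_m)$ is precisely the $m$-th correction term of $\Delta_{i,j}$; (iii) if $m\in\{0,\dots,i-1\}\setminus\{j\}$, the class $h^m$ appears on both factors $j$ and $m$, so the transposition $(j\;m)$ stabilizes $B_m$ and $\text{sym}(B_m)$ equals twice its orbit sum, hence vanishes in $\text{Ch}$. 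Summing these contributions gives $\widetilde R_j=\Delta_{i,j}$.

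The main obstacle is the even-dimensional case $n=2d$: Poincar\'e duality in middle degree can produce an extra $h^d\times h^d$ term in $\Delta_X$, depending on $\int_{X_K}l_d^2$. Multiplied by $\prod_{k\neq j,\, k<i}p_k^{\ast}(h^k)$, this contributes a cycle with $h^d$ on both factors $j$ and $i$; since $d\geq i$ the class $h^d$ does not coincide with any of the other $h^k$ present, so the stabilizer again contains $(j\;i)$ and the contribution vanishes in $\text{Ch}$. Only the $h^d\times l_d+l_d\times h^d$ piece survives, fitting into case (ii) above with $m=d$ (via the chosen orientation of $l_d$). Hence the analysis carries through unchanged.
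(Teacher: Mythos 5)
Your construction is genuinely different from the paper's proof, which proceeds by induction on $i$: there, the rationality of the diagonal class gives $\Delta_{1,0}$ as the base case, $\Delta_{i,j}$ for $j\leq i-2$ is produced as $\sum_{l=0}^{i}\sigma_{\ast}^{l}(\Delta_{i-1,j}\times h^{i-1})$ for a cyclic $\sigma\in S_{i+1}$, and $\Delta_{i,i-1}$ is treated separately via the alternating-group symmetrization of $(\times_{k=0}^{i-2}h^k)\times\Delta_{1,0}$. You instead exhibit a closed-form rational representative in one step, by pulling back the diagonal to the factors $j$ and $i$, multiplying by the nonessential classes $h^k$, and summing over cosets of the stabilizing transposition. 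Your case analysis (i)--(iii) is correct: the coset sum of $B_m+\tau_{\ast}B_m$ does recover the full symmetrization $\text{sym}(B_m)$; the terms $m=j$ and $m\geq i$ reproduce exactly the two kinds of summands of $\Delta_{i,j}$; and for $m\in\{0,\dots,i-1\}\setminus\{j\}$ the full symmetrization vanishes mod $2$ because the stabilizer of $B_m$ contains the transposition $(j\;m)\notin H$. This is a clean, non-inductive alternative.

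There is, however, one incorrect step, in your treatment of the case $4\mid n$. The extra term $C:=p_j^{\ast}(h^d)\,p_i^{\ast}(h^d)\prod_{k\in\{0,\dots,i-1\}\setminus\{j\}}p_k^{\ast}(h^k)$ coming from $h^d\times h^d$ satisfies $\tau_{\ast}C=C$, so it occurs only once in $R_j$ (not as $C+\tau_{\ast}C$), and its contribution to $\widetilde R_j$ is the coset sum $\sum_{[\sigma]\in S_{i+1}/H}\sigma_{\ast}C$, not the full symmetrization. Since the stabilizer of $C$ is exactly $H$ (as you note, $h^d$ is distinct from the other $h^k$ present), this coset sum is the orbit sum of $C$ with each orbit element occurring exactly once; it is a sum of $(i+1)!/2$ distinct basis monomials and does not vanish --- the cancellation of case (iii) requires a stabilizing transposition \emph{outside} $H$, which is precisely what is missing here. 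Already for $i=1$ one gets $\widetilde R_0=\Delta_{1,0}+h^d\times h^d$. The slip is harmless for the statement being proved: the offending contribution is a sum of external products of powers of $h$, i.e., of nonessential and hence rational cycles, so $\Delta_{i,j}$ still differs from the rational cycle $\widetilde R_j$ by a rational cycle and remains rational. Replace ``the contribution vanishes'' by this observation and the proof is complete.
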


\begin{proof}[Proof] We proceed by induction on $i$. 
In $\text{Ch}^{n}(X_{K}^{2})$, the cycle $\Delta_{1,0}$
or $\Delta_{1,0}+h^d\times h^d$, depending on whether $l_d^2=0$ or not, is the class of the diagonal. 
Therefore , the cycle $\Delta_{1,0}$ is rational.
Let $\sigma \in S_{i+1}$ be a cyclic permutation (with $i\geq 2$). For $0\leq j \leq i-2$, the induction hypothesis step is provided by the identity
\[\Delta_{i,j}=\sum_{l=0}^{i}\sigma_{\ast}^{l}(\Delta_{i-1,j}\times h^{i-1})\;\;\text{in}\;\; \text{Ch}(X_{K}^{i+1}).\]
It remains to show that the cycle $\Delta_{i,i-1}$ is rational to complete the proof.
In $\text{Ch}(X_{K}^{i+1})$, one has 
\[\begin{array}{lll}\Delta_{i,i-1}& = &\sum_{m=i-1}^d\text{sym}\left((\times_{k=0}^{i-2}h^k)\times l_m\times h^m\right)  \\
& & \\
& = & \sum_{m=0}^d\text{sym}\left((\times_{k=0}^{i-2}h^k)\times l_m\times h^m\right)\end{array}\]
and the latter sum can be rewritten as
\[\sum_{s\in A_{i+1}} s_{\ast}\left((\times_{k=0}^{i-2}h^k)\times \Delta_{1,0} \right).\]
Thus, the cycle $\Delta_{i,i-1}$ is rational.

\end{proof}

\subsection{Correspondences}

We refer to \cite[\S 62]{EKM} for an introduction to Chow-correspondences.

For any $1\leq i \leq d$, we denote by $\theta_i$ the class of the subvariety 
\[\{(y,x_1,\dots,x_{i+1})\;|\;x_1,\dots x_{i+1} \in y\}\subset G_i \times X^{i+1}\]
in $\text{CH}(G_i\times X^{i+1})$ and we view the cycle $\theta_i$ as a correspondence
$G_i\rightsquigarrow  X^{i+1}$.

We set
\begin{equation}
\eta_i:= \prod_{k=1}^i  \left(\text{Id}_{G_i}\times p_{X^i_k} \right)^{\ast}\left([\mathcal{F}(i,0)]\right) \in \text{CH}(G_i\times X^i),
\end{equation}
with $p_{X^i_k}$ the projection from $X^i$ to the $k$-th coordinate.
For any integer $i\leq s \leq d$, we write
\[W_{s-i}^i:={\pi_{(0,\underline{i})}}_{\ast} \circ\pi_{(\underline{0},i)}^{\ast}(h^{s})\in \text{CH}^{s-i}(G_i),\]
and $w^i_{s-i}:=W^i_{s-i}\;(\text{mod}\;2)\in \text{Ch}^{s-i}(G_i)$.
Since the variety $X_K$ is cellular, the cycle $[\mathcal{F}(i,0)]$ decomposes as 
\begin{equation} [\mathcal{F}(i,0)]=\sum_{s=0}^d z^i_{n-i-s}\times h^s +
\sum_{s=i}^d w^i_{s-i}\times l_s  \;\text{in}\; \text{Ch}({G_i}_K\times X_K),
\end{equation}
where $l_d$ has to be replaced by the other class $l_d'$ of maximal totally isotropic subspaces if
$n=2d$ and $l_d^2$ is not zero, i.e., if $4$ divides $n$ (see \cite[Theorem 66.2]{EKM}).
\medskip

The two following lemmas, where we write $p$ with underlined target for projections, 
can be proven the same way
 \cite[Lemmas 3.2 and 3.10]{sym} have been proven but with 
$Z^i_{n-i-j}$ (resp. $z^i_{n-i-j}$) instead of $Z^i_{n-i}$ (resp. $z^i_{n-i}$).

\begin{lemme}
\textit{For any $1\leq i \leq d$, $0\leq j \leq d$ and}
$x\in \text{CH}(X_{K})$\textit{, one has}
\begin{multline*}\left((\theta_i)_{\ast}(Z^i_{n-i-j})\right)_{\ast}(x)= \\
{p_{G_i\times \underline{X^{i}}}}_{\;\ast}\left( p_{\underline{G_i}\times X^{i}}^{\ast}\left(
{\pi_{(0,\underline{i})}}_{\ast} \circ \pi_{(\underline{0},i)}^{\ast}(x)   \cdot Z^i_{n-i-j}\right) \cdot \eta_i  \right),\end{multline*}
\textit{where the cycle $(\theta_i)_{\ast}(Z^i_{n-i-j})$ is viewed as a correspondence 
$X_{K}\rightsquigarrow X^{i}_{K}$.}
\end{lemme}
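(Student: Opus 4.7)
The plan is to unwind $\left((\theta_i)_\ast(Z^i_{n-i-j})\right)_\ast(x)$ directly from the definitions of the two correspondence actions, factorise the incidence class $\theta_i$ in a way that singles out the first $X$-factor of $X^{i+1}$, and then apply projection formula together with base change to collapse the computation to what is happening on $G_i$.

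The key input is a factorisation of $\theta_i$. Write $X^{i+1} = X \times X^i$ with the first copy of $X$ viewed as the ``input'' factor. Since $\theta_i$ is by definition the class of the incidence variety cut out by the $i+1$ conditions $x_k \in y$, the same reasoning behind (2.1) yields
\[
\theta_i \;=\; r_1^\ast\bigl([\mathcal{F}(i,0)]\bigr)\;\cdot\;r^\ast(\eta_i)
\quad\text{in}\quad \text{CH}(G_i\times X^{i+1}),
\]
where $r_1:G_i\times X^{i+1}\to G_i\times X$ retains the first $X$-factor and $r:G_i\times X^{i+1}\to G_i\times X^i$ drops it.

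I would then substitute this decomposition into the composition of the two correspondences defining the left-hand side. After routine projection-formula manipulations, the cycle $Z^i_{n-i-j}$ (which lives on $G_i$) and the factor $\eta_i$ (which lives on $G_i\times X^i$) both pull out of the middle pushforward along the first $X$-factor, leaving one to compute $\pi_\ast\bigl(\tilde p_1^\ast(x)\cdot r_1^\ast([\mathcal{F}(i,0)])\bigr)$ for $\pi:G_i\times X\times X^i\to G_i\times X^i$ the projection dropping the middle $X$. Base change along the Cartesian square
\[
\begin{array}{ccc}
G_i\times X\times X^i & \longrightarrow & G_i\times X\\
\downarrow & & \downarrow\\
G_i\times X^i & \longrightarrow & G_i
\end{array}
\]
identifies this with the pullback to $G_i\times X^i$ of $(p_{G_i})_\ast\bigl(p_X^\ast(x)\cdot[\mathcal{F}(i,0)]\bigr)$, which by the definition of $\mathcal{F}(0,i)$ as the incidence locus with its two projections equals $\pi_{(0,\underline{i})\ast}\circ\pi_{(\underline{0},i)}^\ast(x)$. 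Reassembling the pieces gives the claimed identity.

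The main obstacle is purely bookkeeping: a dozen or so projections between $G_i$, $X$, $X^i$, and $X^{i+1}$ must be kept straight, and projection formula and base change must be applied in the correct order. No new geometric input is required beyond the factorisation of $\theta_i$ above, which is why, as the paper notes, the calculation is strictly parallel to that of \cite[Lemma 3.2]{sym}: the codimension index $j$ enters only through the formal cycle $Z^i_{n-i-j}$ on $G_i$ and plays no role in the manipulations.
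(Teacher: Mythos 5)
Your proposal is correct and matches the paper's intended argument: the paper gives no proof of its own here, delegating to \cite[Lemma 3.2]{sym}, and the computation there is exactly the one you reconstruct --- factor $\theta_i$ as the product of the $i+1$ pulled-back incidence classes (splitting off the first $X$-factor so that one factor is $r_1^{\ast}([\mathcal{F}(i,0)])$ and the rest assemble into $r^{\ast}(\eta_i)$), then apply the projection formula and flat base change to identify the push-forward over the middle $X$ with $\pi_{(0,\underline{i})\ast}\circ\pi_{(\underline{0},i)}^{\ast}(x)$ pulled back to $G_i\times X^i$. The only point deserving a word in a full write-up is that the product of the incidence classes really is the class of their (proper, generically transverse) intersection, which is the same justification already implicit in (2.1), as you note.
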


For any $1\leq i \leq d$, 
we write $\mathcal{F}(i-1,i)$ for the partial orthogonal flag variety of $(i-1)$-dimensional totally isotropic subspaces contained in $i$-dimensional totally isotropic subspaces
and we consider the diagram
\[\xymatrix{
G_{i-1} & \mathcal{F}(i-1,i) \ar[l]^{\pi_{(\underline{i-1},i)}} \ar[r]_{\,\,\,\,\,\,\,\,\,\,\pi_{(i-1,\underline{i})}} & G_i,
}\]
given by the natural projections.

\begin{lemme}
\textit{For any $2\leq i\leq d$, $0\leq j \leq d$ and $i\leq m \leq d$, the cycle} 
\[{p_{G_i\times \underline{X^{i}}}}_{\;\ast}\left(w^i_{m-i}   \cdot z^i_{n-i-j} \cdot \eta_i \right)\in \text{Ch}(X^i_K),\] 
\textit{where we abuse notation and write $\eta_i$ for} $\eta_i \;(\text{mod}\;2)$\textit{, can be rewritten as}
\[
\sum_{s=0}^m \sum_{k=\text{max}(i-s,0)}^{\text{min}(m-s,i)}
 {p_{G_{i-1}\times \underline{X^{i-1}}}}_{\;\ast}\left( 
w^{i-1}_{m-s-k}\cdot \sigma_{i-1}^k   \cdot z^{i-1}_{n-i+1-j} \cdot \eta_{i-1}  \right)\times h^s ,\]
\textit{with} $\sigma_{i-1}^k={\pi_{(\underline{i-1},i)}}_{\ast} \circ \pi_{(i-1,\underline{i})}^{\ast}
(z^{i}_{n-2i+k}) \in \text{Ch}^j({G_{i-1}}_K)$.
\end{lemme}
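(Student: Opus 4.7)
The plan is to follow the arguments of \cite[Lemmas 3.2 and 3.10]{sym} essentially verbatim, with the systematic substitution $Z^i_{n-i}\mapsto Z^i_{n-i-j}$ and $Z^{i-1}_{n-i+1}\mapsto Z^{i-1}_{n-i+1-j}$. In those proofs the specific codimension of the cycle being tracked enters only formally, carried through a fixed commutative diagram of projections between grassmannians and flag varieties, so the index shift should survive unchanged.

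First, I would peel off the last quadric factor. Writing $X^i=X^{i-1}\times X$ and using the definition (2.1), one factors $\eta_i$ as the external product of $\eta_i':=\prod_{k=1}^{i-1}(\mathrm{Id}_{G_i}\times p_{X^i_k})^{\ast}[\mathcal{F}(i,0)]$ (pulled back from $G_i\times X^{i-1}$) and $[\mathcal{F}(i,0)]$ (pulled back from $G_i\times X$ on the last coordinate). Substituting the cellular decomposition (2.2) in the last factor and pushing the product $w^i_{m-i}\cdot z^i_{n-i-j}\cdot\eta_i$ forward along $G_i\times X^i\to X^i$ via the projection formula splits the result into a sum indexed by the cells $h^s$ and $l_s$ of $X$. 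The $l_s$-components yield terms of the form $\bigl(\text{push-forward of }w^i_{m-i}\cdot z^i_{n-i-j}\cdot w^i_{s-i}\cdot\eta_i'\bigr)\times l_s$, which are discarded by the vanishing argument used in \cite[Lemma 3.2]{sym}. The remaining $h^s$-contributions read
\[\sum_{s} p_{G_i\times\underline{X^{i-1}},\,\ast}\bigl(w^i_{m-i}\cdot z^i_{n-i-j}\cdot z^i_{n-i-s}\cdot\eta_i'\bigr)\times h^s.\]

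Second, I would reduce the $G_i$-push-forward to a $G_{i-1}$-push-forward via the flag variety $\mathcal{F}(i-1,i)$ together with its projections $\pi_{(\underline{i-1},i)}$ and $\pi_{(i-1,\underline{i})}$. Two ingredients are required: (a) a Chow-theoretic decomposition of the product $w^i_{m-i}\cdot z^i_{n-i-s}$ on $G_i$ as the push-down through $\pi_{(\underline{i-1},i)}$ of a pullback through $\pi_{(i-1,\underline{i})}$ of a combination $\sum_k w^{i-1}_{m-s-k}\cdot\sigma_{i-1}^k$; and (b) a compatibility identity relating $\eta_i'$ to $\eta_{i-1}$ through the same pair of projections (tensored with $\mathrm{Id}_{X^{i-1}}$). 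Combining (a) and (b) with the projection formula and the very definition of $\sigma_{i-1}^k$ converts each summand of the previous step into
\[p_{G_{i-1}\times\underline{X^{i-1}},\,\ast}\bigl(w^{i-1}_{m-s-k}\cdot\sigma_{i-1}^k\cdot z^{i-1}_{n-i+1-j}\cdot\eta_{i-1}\bigr)\times h^s,\]
summed over $k$ in the stated range.

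The main obstacle is the second step, specifically point (a). One must verify the Chow-theoretic decomposition of $w^i_{m-i}\cdot z^i_{n-i-s}$ and match the resulting summation bounds with $\max(i-s,0)\le k\le\min(m-s,i)$. The upper bound $k\le m-s$ reflects nonnegativity of the codimension of $w^{i-1}_{m-s-k}$; the upper bound $k\le i$ comes from the natural range of the $\sigma$-family; and the lower bound $k\ge\max(i-s,0)$ reflects the analogous constraint on $\sigma_{i-1}^k$ together with the vanishing of $w^{i-1}_{\ast}$ outside its allowed degrees. Once this combinatorial identity is matched with the corresponding computation in \cite[\S 3]{sym}, the remainder of the argument is purely formal and the substitution $n-i\to n-i-j$ goes through unchanged.
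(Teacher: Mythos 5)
Your proposal matches the paper exactly: the paper offers no independent proof of this lemma, stating only that it can be proven the same way as \cite[Lemmas 3.2 and 3.10]{sym} but with $z^i_{n-i-j}$ in place of $z^i_{n-i}$, which is precisely the substitution strategy you adopt. Your sketch of the underlying mechanism --- peeling off the last $X$-factor via the cellular decomposition (2.2) to produce the external $h^s$, then descending from $G_i$ to $G_{i-1}$ through $\mathcal{F}(i-1,i)$ using the Whitney-type relation between $w^{i-1}_{\ast}$ and $\sigma_{i-1}^{\ast}$ --- is consistent with the structure of the stated formula and with how the lemma is used later in the paper.
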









\section{Equivalence}
In this section, we continue to use notation and material introduced in the previous sections and
we prove Theorem 1.2.

\medskip

For $1\leq i \leq d$ and $0\leq j \leq i-1$,
we set 
\[\alpha_{i,j}:= (\theta_i)_{\ast}(Z^i_{n-i-j})+\rho_{i,j}   \in\text{CH}(X_{K}^{i+1}),\]
and we view the cycle  $\alpha_{i,j}$ as a correspondence $X_{K}\rightsquigarrow X^{i}_{K}$.

\begin{prop}
\textit{One has}
\[\left( \alpha_{i,j} \;(\text{mod}\;2)\right)_{\ast}(h^m)=\left\{\begin{array}{ll} 
\text{sym}\left(\times_{k=0}^{i-1}h^k\right) & \textit{if} \:\: m=j  \textit{;}\\
\text{sym}\left((\times_{\substack{ k=0\\ k\neq j  }}^{i-1} h^k)\times h^m\right) & \textit{if}\:\: i \leq m \leq d \textit{;} \\
0 & \textit{otherwise.} 
\end{array} \right. \]
\end{prop}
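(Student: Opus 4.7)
\emph{Plan.} I would split $(\alpha_{i,j} \pmod{2})_*(h^m)$ as the sum of two contributions,
\[
\bigl((\theta_i)_*(Z^i_{n-i-j})\bigr)_*(h^m) \;\text{ and }\; (\rho_{i,j})_*(h^m),
\]
compute each modulo $2$, and then match the resulting expression against the three cases in the statement.

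For the $\rho_{i,j}$ piece, I would write $\rho_{i,j} = \sum_{s \in S_{i+1}} s_*\bigl((\times_{k=0}^{i-1} h^k) \times l_j\bigr)$ and analyze, permutation by permutation, which factor sits at the source position $0$. The correspondence action, computed via the projection formula, yields the coefficient $\deg(h^{m+k}) \in 2\mathbb{Z}$ if $s$ places some $h^k$ at position $0$, hence a trivial contribution mod $2$, and yields $\deg(h^m \cdot l_j) \equiv \delta_{m,j} \pmod{2}$ if $s$ places $l_j$ at position $0$. Summing over the $i!$ permutations with $s(i)=0$ redistributes $h^0, h^1, \ldots, h^{i-1}$ across the remaining positions, producing
\[
(\rho_{i,j})_*(h^m) \equiv \delta_{m,j}\, \text{sym}(\times_{k=0}^{i-1} h^k) \pmod{2}.
\]

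For the $(\theta_i)_*(Z^i_{n-i-j})$ piece, I would feed $x = h^m$ into Lemma 2.3. Since the fibers of $\pi_{(0,\underline{i})}$ have dimension $i$, a dimension count forces $\pi_{(0,\underline{i})*} \pi_{(\underline{0},i)}^*(h^m) = 0$ when $m < i$, and it equals $W^i_{m-i}$ when $i \leq m \leq d$. Thus this contribution is $0$ for $m < i$ and reduces mod $2$ to $p_*(w^i_{m-i} \cdot z^i_{n-i-j} \cdot \eta_i)$ for $i \leq m \leq d$, which Lemma 2.4 unpacks as an explicit double sum indexed by $(s,k)$, whose terms are external products of $X^{i-1}$-pushforwards with $h^s$.

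Matching the cases is then straightforward in three of the four regimes: for $m = j$ (where $j \leq i-1 < i$) only the $\rho_{i,j}$ piece survives and matches $\text{sym}(\times_{k=0}^{i-1} h^k)$; for $m \in \{0,\ldots,i-1\} \setminus \{j\}$ both pieces vanish; for $m > d$ the class $h^m$ is itself $2$-divisible on $X$, so everything vanishes mod $2$. The substantive case is $i \leq m \leq d$, where the $\rho_{i,j}$ piece vanishes and I must identify
\[
p_*(w^i_{m-i} \cdot z^i_{n-i-j} \cdot \eta_i) \equiv \text{sym}\bigl((\times_{k=0,\,k\neq j}^{i-1} h^k) \times h^m\bigr) \pmod{2}.
\]
This is the main obstacle. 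My plan is to proceed by induction on $i$, with base $i=1$ amounting to showing that the correspondence $(\theta_1)_*(z^1_{n-1})$ acts as the identity mod $2$ on each $h^m$ with $1 \leq m \leq d$, which follows from the decomposition (2.2) together with the action of $\theta_1$ on $h^m$. For the inductive step, I would substitute the expansion of Lemma 2.4 into the left-hand side, apply the inductive statement to compute the interior $X^{i-1}$-pushforward, and reassemble the outer $\times h^s$ terms into the symmetric product on $X^i$. The delicate point is to track the parity of the Schubert intersection numbers hidden in $\sigma_{i-1}^k$ and to isolate those indices $(s,k)$ producing nontrivial contributions mod $2$, so that exactly the symmetrization $\text{sym}\bigl((\times_{k \neq j} h^k) \times h^m\bigr)$ emerges.
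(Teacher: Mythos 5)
Your treatment of the cases $m\le i-1$ (dimension count for the $\theta_i$ piece, direct computation of the $\rho_{i,j}$ piece) and of the trivial case $m>d$ agrees with the paper and is fine. The gap is exactly where you flag it: the case $i\le m\le d$. You propose an \emph{ascending} induction on $i$ with base $i=1$, but the inductive step cannot be run on the information your inductive hypothesis supplies. Expanding ${p_{G_i\times \underline{X^{i}}}}_{\;\ast}\left(w^i_{m-i}\cdot z^i_{n-i-j}\cdot\eta_i\right)$ via Lemma 2.5 produces terms ${p_{G_{i-1}\times \underline{X^{i-1}}}}_{\;\ast}\left(w^{i-1}_{m-s-k}\cdot\sigma_{i-1}^{k}\cdot z^{i-1}_{n-i+1-j}\cdot\eta_{i-1}\right)$ carrying the extra factor $\sigma_{i-1}^{k}=c_k(T_{i-1})\;(\text{mod}\;2)$ for $k\ge 1$; the statement of the proposition at level $i-1$ only controls the $k=0$ terms, and nothing in your plan determines the $k\ge 1$ ones --- ``tracking the parity of the Schubert intersection numbers hidden in $\sigma_{i-1}^k$'' is precisely the unproved content, and it is the heart of the matter, not a detail. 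Your base case $i=1$ is also not set up: it requires degree computations in $\text{Ch}({G_1}_K)$ for which the paper provides no tool (the degree formula of \cite[Lemma 87.6]{EKM} is available only for the maximal grassmannian $G_d$).

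The paper circumvents both problems by running the induction \emph{downwards}, from $i=d$ to $i=1$. The base $i=d$ is where the multiplicative structure is fully known, via the exterior-algebra description of $\text{Ch}({G_d}_K)$ and the degree formula just cited. The descending step then needs only the \emph{sum} over $k\ge 1$ of the problematic terms, which is exactly what one reads off as the coordinate on the top-right $h^{i-1}$ of the already-established level-$i$ identity; the Whitney Sum Formula $\sum_{k} w^{i-1}_{m-i+1-k}\cdot c_k(T_{i-1})=c_{m-i+1}(V\mathbbm{1})=0$ then converts that sum into the single $k=0$ term, yielding the level-$(i-1)$ statement. If you insist on an ascending argument you would have to establish, as an auxiliary induction, the parities of all the degrees appearing in Remark 3.9 of the paper, which is essentially equivalent to redoing the descending argument. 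To repair your proof, replace the ascending induction by the descending one, prove the base case $i=m=d$ from (2.2), (2.3), (3.2) and the degree formula, and insert the Whitney-sum step to isolate the $k=0$ term at each stage.
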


\begin{proof}[Proof]
For any $x\in \text{CH}^m(X_{K})$ with $m\leq i-1$, the cycle 
${\pi_{(0,\underline{i})}}_{\ast} \circ \pi_{(\underline{0},i)}^{\ast}(x)$
is trivial by dimensional
reasons. Thus, by Lemma 2.4, the cycle $\left((\theta_i)_{\ast}(Z^i_{n-i-j})\right)_{\ast}(x)$ is also trivial.
Therefore, since $(\rho_{i,j})_{\ast}(h^m)=\text{sym}\left(\times_{k=0}^{i-1}h^k\right) $ if $m=j$ and is trivial otherwise,
one get the conclusion of Proposition 3.1 for the cases
 $m\leq i-1$.

Moreover, for $i\leq m\leq d$, Lemma 2.4 provides the identity
\begin{equation}\left( \alpha_{i,j} \;(\text{mod}\;2)\right)_{\ast}(h^m)=
{p_{G_i\times \underline{X^{i}}}}_{\;\ast}\left( 
w^i_{m-i}   \cdot z^i_{n-i-j} \cdot \eta_i \right) \; \text{in}\; \text{Ch}(X^i_K).
\end{equation}
We prove the cases $i\leq m\leq d$ of Proposition 3.1 by descending induction on $i$.
The base of the descending induction $i=d$ (so $i=m=d$) is obtained by combining 
the identities (2.2), (2.3)  and (3.2) for $i=d$ (recall also that, by \cite[Proposition 2.1]{uINV}, one has $W^i_0=1$ for any $0\leq i \leq d$) 
with the fact that, for any integers $0\leq a_0\leq a_1 \leq \cdots \leq a_{e} \leq d$, with $e\leq d$, one has
\[\text{deg}\left(\prod_{k=0}^e z^d_{n-d-a_k}    \right)=\left \{\begin{array}{ll} 1 & \;\;\text{if}\;\;              \{a_0, a_1, \dots , a_e\}=\{0,1,\dots , d\}\; ; \\
 0 &\;\; \text{otherwise} ,\end{array}\right.\]
where $\text{deg}:\text{Ch}({G_d}_K)\rightarrow \text{Ch}(\text{Spec}(K))=\mathbb{Z}/2\mathbb{Z}$ is 
the homomorphism associated with the push-forward of the structure morphism, see \cite[Lemma 87.6]{EKM}.

Let $2\leq i\leq d$ and $0\leq j\leq i-2$. On the one hand, by descending induction hypothesis, for any $i\leq m\leq d$, one has
\begin{equation} {p_{G_i\times \underline{X^{i}}}}_{\;\ast}\left( 
w^i_{m-i}   \cdot z^i_{n-i-j} \cdot \eta_i \right)
=\text{sym}\left((\times_{\substack{ k=0\\ k\neq j  }}^{i-1} h^k)\times h^m\right).\end{equation}
Therefore, the coordinate of (3.3) on top right $h^{i-1}$, i.e., 
\[{p_{\underline{X^{i-1}}\times X}}_{\;\ast}\left(\left({p_{G_i\times \underline{X^{i}}}}_{\;\ast}\left( 
w^i_{m-i}   \cdot z^i_{n-i-j} \cdot \eta_i \right)\right)\cdot [X^{i-1}]\times l_{i-1}\right)\]
is equal to
\begin{equation}\text{sym}\left((\times_{\substack{ k=0\\ k\neq j  }}^{i-2} h^k)\times h^m\right).\end{equation}
On the other hand, by Lemma 2.5, this coordinate is also equal to
\begin{equation}\sum_{k=1}^{\text{min}(m-i+1,i)}
 {p_{G_{i-1}\times \underline{X^{i-1}}}}_{\;\ast}\left( 
w^{i-1}_{m-i+1-k}\cdot \sigma_{i-1}^k   \cdot z^{i-1}_{n-i+1-j} \cdot \eta_{i-1}  \right).\end{equation}
Let us denote by $T_{i-1}$ the tautological vector bundle on $G_{i-1}$,
i.e., $T_{i-1}$ is given by the closed subvariety of the trivial bundle
$V\mathbbm{1}=V_q\times G_{i-1}$ consisting of pairs $(u,U)$ such that $u\in U$.
Note that the vector bundle $T_{i-1}$ has rank $i$.
For a vector bundle $E$ over a scheme, we write $c_i(E)$ for the $i$-th Chern class with value in $\text{CH}$.
Since 
$W^{i-1}_{m-i+1-k}=c_{m-i+1-k}(V\mathbbm{1}/T_{i-1})$
(see \cite[Proposition 2.1]{uINV}) and $\sigma_{i-1}^k =$$c_k(T_{i-1})$$\;(\text{mod}\;2)$ (see
\cite[Lemma 2.6]{sym}), by Whitney Sum Formula (see \cite[Proposition 54.7]{EKM}), one has
\begin{multline} \sum_{k=0}^{\text{min}(m-i+1,i)}w^{i-1}_{m-i+1-k}\cdot \sigma_{i-1}^k  
 = \\ \sum_{k=0}^{\text{min}(m-i+1,i)} c_{m-i+1-k}(V\mathbbm{1}/T_{i-1})\cdot c_k(T_{i-1}) = \\
   c_{m-i+1}(V\mathbbm{1}).\end{multline}
Moreover, one has $c_{m-i+1}(V\mathbbm{1})=0$ because $m-i+1>0$. 
Consequently, in view of (3.4), (3.5) and (3.6), one get
\begin{equation}{p_{G_{i-1}\times \underline{X^{i-1}}}}_{\;\ast}\left( 
w^{i-1}_{m-i+1} \cdot z^{i-1}_{n-i+1-j} \cdot \eta_{i-1}  \right)=\text{sym}\left((\times_{\substack{ k=0\\ k\neq j  }}^{i-2} h^k)\times h^m\right).\end{equation}
By identities (3.2) and (3.7), it only remains to prove the case $m=i-1$
to complete the descending induction step.
On the one hand, by descending induction hypothesis, 
the coordinate of $ {p_{G_i\times \underline{X^{i}}}}_{\;\ast}\left( 
 z^i_{n-i-j} \cdot \eta_i \right)$ on top right $h^{i}$ is
\[\text{sym}\left(\times_{\substack{ k=0\\ k\neq j  }}^{i-1} h^k\right)\] 
(see (3.3)) and,
on the other hand, by Lemma 2.5, it is also equal to 
\[{p_{G_{i-1}\times \underline{X^{i-1}}}}_{\;\ast}\left( 
 z^{i-1}_{n-i+1-j} \cdot \eta_{i-1}  \right).\]
Proposition 3.1 is proven.
\end{proof}

As a consequence of Proposition 3.1, we partially obtain the first part of Theorem 1.2.

\begin{cor}
 \textit{Let $1\leq i \leq d$ and $0\leq j \leq i-1$. If the cycle $z^i_{n-i-j}$ is rational then the cycle} $\rho_{i,j}\;(\text{mod}\;2)$ \textit{is also rational.}
\end{cor}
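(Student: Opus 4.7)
The plan is to reduce the question to rationality of the auxiliary cycle $\alpha_{i,j}\;(\text{mod}\;2)$ and then compare this with the rational cycle $\Delta_{i,j}$ furnished by Lemma 2.1, exploiting the correspondence action computed in Proposition 3.1.

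First I would observe that the correspondence $\theta_i$ is the class of an $F$-defined incidence subvariety of $G_i\times X^{i+1}$, hence $F$-rational. Consequently, if $z^i_{n-i-j}$ is rational, then so is $(\theta_i)_{\ast}(Z^i_{n-i-j})\;(\text{mod}\;2)$. By the very definition $\alpha_{i,j}=(\theta_i)_{\ast}(Z^i_{n-i-j})+\rho_{i,j}$, rationality of $\rho_{i,j}\;(\text{mod}\;2)$ is then equivalent to rationality of $\alpha_{i,j}\;(\text{mod}\;2)$, which is what I would aim to establish.

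The core of the argument is to show that $\alpha_{i,j}\;(\text{mod}\;2)$ and $\Delta_{i,j}$ differ by an $F$-rational cycle. A direct computation of $(\Delta_{i,j})_{\ast}(h^m)$ for $0\leq m\leq d$, using the mod-$2$ intersection numbers $\deg(h^m\cdot l_k)\equiv\delta_{m,k}$ and $\deg(h^m\cdot h^k)\equiv 0$, reproduces exactly the three-case formula of Proposition 3.1 obtained for $\alpha_{i,j}\;(\text{mod}\;2)$. Therefore the difference $\alpha_{i,j}\;(\text{mod}\;2)-\Delta_{i,j}$ is a symmetric cycle in $\text{Ch}(X_K^{i+1})$ whose correspondence action on every $h^m$ vanishes. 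Via the K\"unneth decomposition $\text{Ch}(X_K^{i+1})\simeq\text{Ch}(X_K)^{\otimes(i+1)}$ and the $h$-$l$ duality on $\text{Ch}(X_K)$, the action on $h^m$ reads off the coefficients of basis monomials carrying an $l_m$-factor in the distinguished coordinate; the symmetry then propagates this vanishing to each of the $i+1$ coordinates. Hence the difference lies in the subring generated by external pullbacks of the rational class $h$, which is tautologically $F$-rational. Combined with the rationality of $\Delta_{i,j}$ from Lemma 2.1, this yields the rationality of $\alpha_{i,j}\;(\text{mod}\;2)$, and thereby of $\rho_{i,j}\;(\text{mod}\;2)$.

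The main obstacle is the K\"unneth-duality step, particularly for $n=2d$ where $\text{Ch}^d(X_K)$ is two-dimensional and the pairing in the basis $\{l_d,l_d'\}$ is more subtle. The orientation $l_d$ fixed before Lemma 2.1 rigidifies the basis, and the mod-$2$ vanishing of the pure-$h$ top intersection numbers ensures that acting by $h^m$ for $m\in\{0,\dots,d\}$ still captures the relevant $l$-parts of the K\"unneth expansion up to a pure-$h$ correction, which is itself $F$-rational.
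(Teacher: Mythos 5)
Your argument is correct and follows essentially the same route as the paper: reduce to the rationality of $\alpha_{i,j}\;(\text{mod}\;2)$ via the $F$-rational correspondence $\theta_i$, match its action on the $h^m$ (Proposition 3.1) with that of $\Delta_{i,j}$, and use symmetry plus the K\"unneth/duality structure of $\text{Ch}(X_K^{i+1})$ to conclude that the difference is a sum of nonessential elements, so that Lemma 2.1 finishes the proof. You merely spell out the ``ring structure and symmetry'' step that the paper leaves implicit, including the $l_d$, $l_d'$ wrinkle when $4$ divides $n$.
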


\begin{proof}[Proof]
In view of the ring structure of $\text{CH}(X_{K}^{i+1})$ (see \cite[\S 68]{EKM}), and
knowing that the cycle $\alpha_{i,j}$ is symmetric, one
deduces from Proposition 3.1 that
\[\alpha_{i,j}\;(\text{mod}\;2)=\Delta_{i,j}+\beta\]
with $\beta$ a sum of nonessential elements (a nonessential element is an external product of powers of the hyperplane class, it is always rational).
Since $\alpha_{i,j}= (\theta_i)_{\ast}(Z^i_{n-i-j})+\rho_{i,j}$ and $\Delta_{i,j}$ is rational (Lemma 2.1), the corollary is proven. 
\end{proof}


\begin{rem}
As a consequence of Proposition 3.1 and its proof, one make the following observation.
Let $1\leq i\leq d-1$, $0\leq j \leq i-1$, $i+1\leq m \leq d$ and $s \in \{0,1,\dots , i\}\backslash \{j\}$.
For any integers $0\leq a_1\leq a_2 \leq \cdots \leq a_i \leq d$, the integer
\[\text{deg}\left(\left(Z^i_{n-i-j}\cdot \prod_{l=1}^i Z^i_{n-i-a_l}\right) \cdot \left(
\sum_{k=0}^{i-s}W^i_{m-s-k}\cdot c_k(T_i)  \right)  \right)\]
is congruent to $1\,(\text{mod}\,2)$ if $\{a_1, \dots , a_i\}=\{m\} \cup (\{0,1,\dots , i\}\backslash \{j,s\})$
and to $0\,(\text{mod}\,2)$ otherwise.
\end{rem}

The following proposition will complete the first part of Theorem 1.2 (see Corollary 3.12).

\begin{prop}
\textit{For any $i\leq j \leq d$, one has}
\[\left((\theta_i)_{\ast}(z^i_{n-i-j})\right)_{\ast}(h^m)=\left\{\begin{array}{ll} 
\text{sym}\left(\times_{k=0}^{i-1}h^k\right) & \textit{if} \:\: m=j  \textit{;}\\
0 & \textit{otherwise.} 
\end{array} \right. \]
\end{prop}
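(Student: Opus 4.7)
The plan is to follow the template of Proposition 3.1's proof, with the simpler right-hand side appropriate to $j \geq i$ (no ``missing'' power $h^j$ needs to be substituted into the symmetric cycle).

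First I would handle the case $m \leq i - 1$ by the dimension argument of Proposition 3.1: the cycle ${\pi_{(0,\underline{i})}}_{\ast}\pi_{(\underline{0},i)}^{\ast}(h^m) \in \text{CH}(G_i)$ vanishes by dimensional reasons, so Lemma 2.4 yields $((\theta_i)_{\ast}(Z^i_{n-i-j}))_{\ast}(h^m) = 0$, which matches the claim since $m < i \leq j$ places us in the ``otherwise'' branch. For $i \leq m \leq d$, Lemma 2.4 together with the definition of $W^i_{m-i}$ reduces the claim to
\[p_{G_i \times \underline{X^i}\,\ast}(w^i_{m-i} \cdot z^i_{n-i-j} \cdot \eta_i) = \text{sym}\!\left(\times_{k=0}^{i-1} h^k\right) \;\text{if}\; m=j, \;\text{and}\; 0 \;\text{otherwise.}\]

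Next I would proceed by descending induction on $i$. For the base case $i = d$ the constraints force $j = m = d$; expanding $\eta_d$ via (2.1)--(2.2) and invoking the degree formula $\deg(\prod_k z^d_{n-d-a_k}) = 1$ exactly when $\{a_k\} = \{0,\ldots,d\}$, just as in the base of Proposition 3.1, gives $\text{sym}(\times_{k=0}^{d-1} h^k)$. For the inductive step, I would derive the identity at level $i-1$ for parameters $(j', m')$ with $j', m' \geq i$ by extracting the coefficient of $h^{i-1}$ on the last factor of both sides of the (assumed) level-$i$ identity. Lemma 2.5 expresses the left as a sum over $k \geq 1$ of level-$(i-1)$ terms; the Whitney sum identity
\[\sum_{k=0}^{\min(m'-i+1,i)} w^{i-1}_{m'-i+1-k} \cdot \sigma_{i-1}^k = c_{m'-i+1}(V\mathbbm{1}) = 0,\]
valid since $m'-i+1 > 0$, isolates the missing $k=0$ term, which is precisely the level-$(i-1)$ quantity sought. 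On the right, the coefficient of $h^{i-1}$ in the last factor of $\text{sym}(\times_{k=0}^{i-1} h^k)$ is $\text{sym}(\times_{k=0}^{i-2} h^k)$ (with $(i-1)!$ terms), so the desired level-$(i-1)$ identity follows. The edge case $m' = i - 1$ (with $j' \geq i$) is handled exactly as in Proposition 3.1, by extracting the coefficient of $h^i$ from the level-$i$ identity at $m = i$ instead.

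The hard part will be the genuinely new edge case $j' = i - 1$ at level $i - 1$, which is absent from Proposition 3.1's setting: since Lemma 2.5 preserves the index $j$, no level-$i$ information (which requires $j \geq i$) can directly produce a level-$(i-1)$ statement at $j = i - 1$. My approach here would be to exploit the relation $l_{i-1} = h \cdot l_i$ in $\text{CH}(X_K)$ together with a projection-formula argument through the projective bundle $\mathcal{F}(0, i-1) = \mathbb{P}(T_{i-1})$, expressing $z^{i-1}_{n-2(i-1)}$ in terms of $z^{i-1}_{n-2i+1}$ so as to reduce this new case to the (already handled) case $j' = i$ at level $i-1$.
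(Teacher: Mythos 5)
Your treatment of $m\le i-1$, the base case $i=d$, the main descending step for $j,m\ge i$ (top-right $h^{i-1}$ coordinate, Lemma 2.5, and the Whitney identity (3.6) isolating the $k=0$ term), and the edge case $m'=i-1$ with $j'\ge i$ all coincide with the paper's argument. The gap is precisely where you predicted difficulty: the case $j'=i-1$ at level $i-1$. Your proposed reduction via $l_{i-1}=h\cdot l_i$ through the projective bundle $\mathcal{F}(0,i-1)=\mathbb{P}(T_{i-1})$ is circular. Writing $\pi_{(\underline{0},i-1)}^{\ast}(l_i)=\sum_{k=0}^{i-1}\xi^{k}p^{\ast}(\beta_k)$ with $\xi=\pi_{(\underline{0},i-1)}^{\ast}(h)$, the projective bundle formula gives $Z^{i-1}_{n-2i+2}=\beta_{i-2}+s_1(T_{i-1})\beta_{i-1}$, while applying the same formula to $\pi_{(\underline{0},i-1)}^{\ast}(h\cdot l_i)$ returns $\beta_{i-2}=Z^{i-1}_{n-2i+2}+c_1(T_{i-1})Z^{i-1}_{n-2i+1}$; substituting one into the other yields a tautology. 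Nor can any such expression of $z^{i-1}_{n-2i+2}$ in terms of $z^{i-1}_{n-2i+1}$ and Chern classes of tautological bundles exist in general: the classes $z^{i-1}_{n-(i-1)-j}$ for distinct $j$ are independent generators of $\text{Ch}({G_{i-1}}_K)$ modulo decomposable elements, as one already sees on the maximal orthogonal grassmannian.

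The paper closes this case by going back up to level $i$ with the index $j=i-1$, which lies outside the range of the present proposition but inside the range of Proposition 3.1. From $(\theta_i)_{\ast}(z^i_{n-2i+1})=\rho_{i,i-1}\,(\text{mod}\,2)+\alpha_{i,i-1}\,(\text{mod}\,2)$, Proposition 3.1 together with the explicit action of $\rho_{i,i-1}$ gives $\left((\theta_i)_{\ast}(z^i_{n-2i+1})\right)_{\ast}(h^m)=\text{sym}\left((\times_{k=0}^{i-2}h^k)\times h^m\right)$ for $i\le m\le d$ (the two contributions at $m=i-1$ cancelling mod $2$). Taking the top-right $h^{i}$ coordinate at $m=i$ produces $\text{sym}\left(\times_{k=0}^{i-2}h^k\right)$, which by Lemma 2.5 equals $\left((\theta_{i-1})_{\ast}(z^{i-1}_{n-2i+2})\right)_{\ast}(h^{i-1})$; taking the top-right $h^{i-1}$ coordinate for $i\le m\le d$ gives $0$ because $h^{i-1}$ does not occur among the factors $h^0,\dots,h^{i-2},h^m$, which settles $\left((\theta_{i-1})_{\ast}(z^{i-1}_{n-2i+2})\right)_{\ast}(h^{m})$ for $m\ge i$. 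So the missing ingredient in your plan is the companion Proposition 3.1 for $j\le i-1$: it supplies exactly the level-$i$, $j=i-1$ input that Lemma 2.5 needs in order to descend, and without it the induction cannot be completed.
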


\begin{proof}[Proof]
We already know from Lemma 2.4 that $\left((\theta_i)_{\ast}(z^i_{n-i-j})\right)_{\ast}(h^m)=0$
for $m\leq i-1$.
We prove the cases $i\leq m\leq d$ by descending induction on $i$.
The base of the descending induction (so $i=j=m=d$) is done similarly as  
the base of the descending induction in the proof of Proposition 3.1.

Let $2\leq i \leq d$, $i\leq j \leq d$ and $i\leq m\leq d$. On the one hand, by Lemma 2.4 and the 
descending induction hypothesis, one has 
\begin{equation} \begin{array}{lll} \left((\theta_i)_{\ast}(z^i_{n-i-j})\right)_{\ast}(h^m) & = &
 {p_{G_i\times \underline{X^{i}}}}_{\;\ast}\left( 
w^i_{m-i}   \cdot z^i_{n-i-j} \cdot \eta_i \right) \\
& & \\
& = & \left\{\begin{array}{ll} 
\text{sym}\left(\times_{k=0}^{i-1}h^k\right) & \text{if} \:\: m=j  \textit{;}\\
0 & \text{otherwise.} 
\end{array} \right. \end{array}\end{equation}
Therefore, the coordinate of (3.11) on top right $h^{i-1}$ is
\[
\left\{\begin{array}{ll} 
\text{sym}\left(\times_{k=0}^{i-2}h^k\right) & \text{if} \:\: m=j  \textit{;}\\
0 & \text{otherwise.} 
\end{array} \right.
\]
On the other hand, by Lemmas 2.4, 2.5 and identity (3.6), this coordinate is also equal to
\[\left((\theta_{i-1})_{\ast}(z^{i-1}_{n-i+1-j})\right)_{\ast}(h^m).\]
It remains to consider the cases $i\leq j\leq d$ with $m=i-1$
and $j=i-1$ with $i-1\leq m \leq d$ to complete the descending induction step.
Let $i-1\leq j \leq d$. By Lemma 2.4, one has 
\[\left((\theta_{i-1})_{\ast}(z^{i-1}_{n-i+1-j})\right)_{\ast}(h^{i-1})=
 {p_{G_{i-1}\times \underline{X^{i-1}}}}_{\;\ast}\left( 
 z^{i-1}_{n-i+1-j} \cdot \eta_{i-1} \right).\]
By Lemma 2.5, the latter cycle is the coordinate on top right $h^i$
of $\left((\theta_i)_{\ast}(z^i_{n-i-j})\right)_{\ast}(h^i)$.
If $j\geq i$ then this coordinate is trivial by the descending induction hypothesis.
Otherwise -- if $j=i-1$ -- then one has 
\[ \left((\theta_i)_{\ast}(z^i_{n-2i+1})\right)_{\ast}(h^i)={\rho_{i,i-1}}_{\ast}(h^i)   
+\left( \alpha_{i,i-1} \;(\text{mod}\;2)\right)_{\ast}(h^i).\]
By Proposition 3.1, the latter cycle is equal to
\[\text{sym}\left((\times_{k=0}^{i-2}h^k)\times h^i\right),\]
whose coordinate on top right $h^i$ is 
\[\text{sym}\left(\times_{k=0}^{i-2}h^k\right).\]
Now suppose that $j=i-1$ and let $i\leq m \leq d$.
By Lemmas 2.4, 2.5 and identity (3.6), the cycle 
$\left((\theta_{i-1})_{\ast}(z^{i-1}_{n-2i+2})\right)_{\ast}(h^{m})$ 
is the coordinate of  $\left((\theta_{i})_{\ast}(z^{i}_{n-2i+1})\right)_{\ast}(h^{m})$ on top right $h^{i-1}$.
Since
\[ \left((\theta_i)_{\ast}(z^i_{n-2i+1})\right)_{\ast}(h^m)={\rho_{i,i-1}}_{\ast}(h^m)   
+\left( \alpha_{i,i-1} \;(\text{mod}\;2)\right)_{\ast}(h^m),\]
${\rho_{i,i-1}}_{\ast}(h^m)=0 $ (because $m \neq i-1$) and $\left( \alpha_{i,i-1} \;(\text{mod}\;2)\right)_{\ast}(h^m)=\text{sym}\left((\times_{k=0}^{i-2}h^k)\times h^m\right)$ (Proposition 3.1), this coordinate is trivial.
This completes the descending induction step. The proposistion is proven.
\end{proof}

\begin{cor}
 \textit{Let $1\leq i \leq d$ and $i\leq j \leq d$. If the cycle $z^i_{n-i-j}$ is rational then the cycle} $\rho_{i,j}\;(\text{mod}\;2)$ \textit{is also rational.}
\end{cor}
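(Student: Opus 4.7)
My plan is to follow exactly the pattern of the proof of Corollary 3.2, but with $j$ now in the range $i\leq j\leq d$ and with Proposition 3.10 playing the role of Proposition 3.1. To this end I first extend the definition of $\alpha_{i,j}$ to this range by the same formula
\[
\alpha_{i,j}:=(\theta_i)_{\ast}(Z^i_{n-i-j})+\rho_{i,j}\in\text{CH}(X_K^{i+1}),
\]
and I observe that $\alpha_{i,j}$ is still symmetric in the $i+1$ factors of $X$, since $(\theta_i)_{\ast}(Z^i_{n-i-j})$ is symmetric by construction of $\theta_i$ (the class of a subvariety symmetric in the $x_k$) and $\rho_{i,j}$ is symmetric by definition.

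Next I would compute $\left(\rho_{i,j}\;(\text{mod}\;2)\right)_{\ast}(h^m)$ directly for $0\leq m\leq d$. Writing $\rho_{i,j}$ as the sum over $s\in S_{i+1}$ of the monomials obtained by placing $l_j$ in one of the $i+1$ factors and the $h^k$ ($k=0,\dots,i-1$) in the remaining factors, the only arrangements producing a nonzero pushforward on $h^m$ are those in which $l_j$ lies in the last factor: these contribute $\text{sym}(\times_{k=0}^{i-1}h^k)$ precisely when $m=j$ (via $\text{deg}(l_j\cdot h^j)=1$), and zero otherwise. The other arrangements give a factor $\text{deg}(h^{k+m})$ with $k\leq i-1$, which vanishes for $m\leq d$ because $k+m\leq(i-1)+d<n$ since $n\geq 2d$. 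Comparing this with Proposition 3.10, I conclude that $\left(\alpha_{i,j}\;(\text{mod}\;2)\right)_{\ast}(h^m)=0$ for every $0\leq m\leq d$.

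The main obstacle, and the analog of the closing line of the proof of Corollary 3.2, is to upgrade this vanishing of the correspondence action into a structural statement about $\alpha_{i,j}\;(\text{mod}\;2)$ itself, namely that it equals a sum $\beta$ of nonessential elements. In view of the symmetry of $\alpha_{i,j}$ together with the ring structure of $\text{Ch}(X_K^{i+1})$ (see \cite[\S 68]{EKM}), the vanishing of all pushforwards $(\alpha_{i,j})_{\ast}(h^m)$ forces every coefficient of a monomial containing an $l_b$ in the last factor to vanish (the computation of the degrees $\text{deg}(a_{i+1}\cdot h^m)$ in $\text{Ch}(X_K)$ leaving only the $a_{i+1}=l_m$ contributions, with the middle-dimensional classes handled exactly as in Corollary 3.2); symmetry then propagates this vanishing to every position, so no $l$-class appears at all in $\alpha_{i,j}\;(\text{mod}\;2)$.

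With this in hand the corollary is immediate: nonessential elements are always rational, and the hypothesis that $z^i_{n-i-j}$ is rational implies that $(\theta_i)_{\ast}(Z^i_{n-i-j})\;(\text{mod}\;2)$ is rational (since $\theta_i$ is defined over $F$, so $(\theta_i)_{\ast}$ preserves rationality). Therefore
\[
\rho_{i,j}\;(\text{mod}\;2)\;=\;\beta\,+\,(\theta_i)_{\ast}(z^i_{n-i-j})
\]
is rational, as desired.
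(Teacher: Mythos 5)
Your proposal is correct and follows essentially the same route as the paper: Proposition 3.10 plus the symmetry of $(\theta_i)_{\ast}(Z^i_{n-i-j})$ and the basis of $\text{Ch}(X_K^{i+1})$ force $(\theta_i)_{\ast}(z^i_{n-i-j})$ and $\rho_{i,j}\;(\text{mod}\;2)$ to differ by a sum of nonessential elements, whence rationality transfers. Your extra steps (introducing $\alpha_{i,j}$ for this range of $j$ and computing $(\rho_{i,j})_{\ast}(h^m)$ explicitly) merely spell out what the paper leaves implicit.
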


\begin{proof}[Proof]
In view of the ring structure of $\text{CH}(X_{K}^{i+1})$  and
knowing that the cycle $(\theta_i)_{\ast}(Z^i_{n-i-j})$ is symmetric, one
deduces from Proposition 3.10 that
\[(\theta_i)_{\ast}(z^i_{n-i-j})=\rho_{i,j}\;(\text{mod}\;2) +\beta\]
with $\beta$ a sum of nonessential elements. The corollary is proven. 
\end{proof}

The next proposition gives the second part of Theorem 1.2.

\begin{prop}
 \textit{Let $1\leq i \leq d$ and $0\leq j \leq d$. If the cycle} $\rho_{i,j}$ \textit{is rational then 
the cycle $Z^i_{n-i-j}$ is also rational.}
\end{prop}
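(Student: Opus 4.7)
My plan is to realize $Z^{i}_{n-i-j}$ as the image of $\rho_{i,j}$ under an explicit rational correspondence from $X^{i+1}$ to $G_i$. I will use the auxiliary rational class
\[\gamma:=h^{i}\times h^{i-1}\times \cdots \times h\times 1 \in \text{CH}(X^{i+1}),\]
with $h^{i-k}$ in the $k$-th factor for $k=0,\dots,i$, together with the incidence cycle $\theta_i\in \text{CH}(G_i\times X^{i+1})$, which is defined over $F$ hence rational. Viewing $\theta_i$ as a correspondence $X^{i+1}\rightsquigarrow G_i$ (denoted $\theta_i^t$), I claim
\[(\theta_i^t)_{\ast}(\rho_{i,j}\cdot \gamma)=Z^{i}_{n-i-j}\quad \text{in}\ \text{CH}(G_i).\]
Granting this identity, the proposition follows at once: $\rho_{i,j}\cdot \gamma$ is rational whenever $\rho_{i,j}$ is, and push-forward of a rational cycle along a rational correspondence is rational.

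The computation goes as follows. First, the defining factorization $\theta_i=\prod_{k=1}^{i+1}(\text{Id}_{G_i}\times p_{X^{i+1}_k})^{\ast}([\mathcal{F}(i,0)])$, combined with the projection formula, reduces $(\theta_i^t)_{\ast}$ applied to a pure tensor $\bigotimes_{k=0}^i \beta_k$ to the product $\prod_{k=0}^i \pi_{(0,\underline{i})\,\ast}\pi_{(\underline{0},i)}^{\ast}(\beta_k)$ in $\text{CH}(G_i)$. Second, recall that this push-pull sends $h^m$ to $W^i_{m-i}$, which vanishes for $m<i$ by dimensional reasons and equals $1$ for $m=i$ by \cite[Proposition 2.1]{uINV}, and sends $l_s$ to $Z^{i}_{n-i-s}$. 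Expanding $\rho_{i,j}=\sum_{\sigma\in S_{i+1}}\sigma_{\ast}(h^{0}\times \cdots \times h^{i-1}\times l_{j})$ and multiplying factor by factor by $\gamma$, the left-hand side of the claim becomes a sum indexed by $\sigma\in S_{i+1}$ of products of the classes $W^i_{\ast}$ with a single $Z^i_{\ast}$.

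The crux, and really the only non-trivial step, will be the combinatorial verification that the sole non-vanishing contribution comes from $\sigma=\text{id}$. Writing $k_l:=\sigma(i)$ for the position occupied by $l_j$ after permutation, a non-vanishing contribution requires $\sigma^{-1}(k)\geq k$ for every $h$-position $k\neq k_l$, since otherwise a factor $W^i_m$ with $m<0$ annihilates the product. If $k_l<i$ then position $k=i$ is an $h$-position imposing $\sigma^{-1}(i)\geq i$, which is impossible; hence $k_l=i$. An iterated descent from $k=i-1$ down to $k=0$ then forces $\sigma=\text{id}$. In that case the $k$-th factor becomes $h^{k}\cdot h^{i-k}=h^{i}$ at each $h$-position, contributing $W^i_{0}=1$, and $l_{j}\cdot 1=l_{j}$ at the $l$-position, contributing $Z^{i}_{n-i-j}$; their product is $Z^{i}_{n-i-j}$, as required. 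This combinatorial analysis is the main obstacle I foresee; the remainder is routine projection-formula bookkeeping together with the formulas recalled in Section 2.
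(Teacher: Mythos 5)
Your proof is correct and takes essentially the same route as the paper: the paper likewise multiplies $\rho_{i,j}$ by the rational class $h^i\times h^{i-1}\times \cdots \times 1$ and applies ${\pi_{(0,\underline{i})}}_{\ast}\circ \pi_{(\underline{0},i)}^{\ast}$ factor by factor, with only the identity permutation surviving for dimensional reasons (all other terms produce a $W^i_m$ with $m<0$). Your packaging via the transposed correspondence $\theta_i$ merely fuses the paper's componentwise push--pull with its final diagonal pullback into one step.
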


\begin{proof}[Proof]
Since ${\pi_{(0,\underline{i})}}_{\ast} \circ \pi_{(\underline{0},i)}^{\ast}(h^i)=[G_i]$ in $\text{CH}^0(G_i)$,
by dimensional reasons, one has
\begin{multline}
({\pi_{(0,\underline{i})}}_{\ast} \circ \pi_{(\underline{0},i)}^{\ast})^{\times i+1}
\left(h^i\times h^{i-1}\times \cdots \times 1
\cdot \text{sym}\left((\times_{k=0}^{i-1}h^k)\times l_j\right)
\right)= \\ [G_i]^{\times i}\times Z^{i}_{n-i-j}.
\end{multline}
The conclusion follows by taking the image of cycle (3.14) under the pull-back of the diagonal morphism $X\rightarrow X^{i+1}$.
\end{proof}

\section{Witt index}

We continue to use notation and material introduced in the previous sections.
In this section, we assume that the $F$-quadric $X$ is anisotropic and we study some
restrictions on the Elementary Discrete Invariant when the first  
Witt index $i_1$ of $X$ is sufficiently large,
using the non-compact form (Theorem 1.2).


\medskip

By \cite[Lemmas 73.18 and 73.3]{EKM},
there exists a unique minimal rational cycle in 
$\text{Ch}^{n-i_1+1}(X^{2}_K)$ containing $1\times l_{i_1-1}$.
This cycle is symmetric (\cite[Lemma 73.17]{EKM}) and is called the \textit{$1$-primordial cycle}. We denote it by $\pi$.

\begin{prop}
\textit{Let $i \in \{1,\dots, d\}$. Suppose that the quadric $X$ is anisotropic with $i_{1}>i$.
If $m\in EDI(X,i)$ is such that $n-m\notin \{i_1\}\cup \{2i_1,\dots, d+1\}$
then $m+1\in EDI(X,i-1)$.}
\end{prop}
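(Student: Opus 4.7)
The plan is to pass to the non-compact form via Theorem 1.2, build a rational cycle by combining the hypothesis with the $1$-primordial cycle $\pi$, and then read off the target from a Schubert expansion. Setting $j := n - i - m$, the hypothesis $m \in EDI(X,i)$ becomes, by Theorem 1.2, the rationality of $\rho_{i,j}\;(\text{mod}\;2) \in \text{Ch}(X^{i+1}_K)$, while the desired conclusion $m+1 \in EDI(X,i-1)$ becomes the rationality of $\rho_{i-1,j}\;(\text{mod}\;2) \in \text{Ch}(X^{i}_K)$ (using $n - (i-1) - (m+1) = j$). The numerical restriction $n - m = i + j \notin \{i_1\} \cup \{2i_1,\dots,d+1\}$ translates into a condition on the codimension of the $l_j$-part of $\rho_{i,j}$ relative to the codimension $n - i_1 + 1$ of $\pi$.

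Next I would write down the Schubert expansion of $\pi$ in the cellular ring $\text{Ch}(X^2_K)$. Being symmetric, rational, of codimension $n - i_1 + 1$, and minimal containing $1 \times l_{i_1-1}$, the cycle $\pi$ is a symmetric sum of basis elements $u \times v$ with $\text{codim}(u) + \text{codim}(v) = n - i_1 + 1$, whose exact shape is controlled by \cite[Lemmas 73.3 and 73.18]{EKM}. Under the assumption $i_1 > i$, the Schubert indices appearing in $\pi$ avoid a direct overlap with the hyperplane powers $h^0, h^1, \dots, h^{i-1}$ featuring in $\rho_{i,j}$, which is what eventually allows isolation of the target.

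The central construction is to form a rational cycle in $\text{Ch}(X^{i+2}_K)$ by inserting $\pi$ into two adjacent coordinates of $\rho_{i,j}\;(\text{mod}\;2) \times [X]$ (for instance, via the product $\bigl(\rho_{i,j}\;(\text{mod}\;2) \times [X]\bigr) \cdot \bigl([X]^{\times (i-1)} \times \pi\bigr)$), then pull it back along a diagonal $X^i \to X^{i+2}$ contracting those two coordinates, and symmetrize by push-forwards of cyclic permutations in $S_i$. Following the strategy of Proposition 3.10 and Corollary 4.2, every term in the resulting sum of Schubert classes is either nonessential (hence automatically rational), vanishes because $h^t = 0$ in $\text{Ch}^t(X_K)$ for $t > d$, cancels modulo $2$ by symmetry repetitions, or contributes to $\rho_{i-1,j}\;(\text{mod}\;2)$. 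Theorem 1.2 applied in the reverse direction then delivers $m + 1 \in EDI(X, i-1)$.

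The main obstacle is the bookkeeping needed to verify that the target cycle $\rho_{i-1,j}\;(\text{mod}\;2)$ truly emerges alone (modulo rational nonessential summands) in the symmetrized contraction. The forbidden value $n - m = i_1$ is exactly the codimension at which the $l_j$-summand of $\rho_{i,j}$ and the $l_{i_1-1}$-summand of $\pi$ pair into a surviving top Schubert class; the range $2i_1 \leq n - m \leq d + 1$ enumerates those codimensions where the correction terms in the expansion of $\pi$ produce further interfering rational components. Checking that outside these exclusions no parasitic rational cycle obstructs the clean isolation of $\rho_{i-1,j}\;(\text{mod}\;2)$ is the technical heart of the proof.
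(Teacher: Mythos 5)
Your translation via Theorem 1.2 is exactly right on both ends ($m\in EDI(X,i)$ iff $\rho_{i,j}\;(\text{mod}\;2)$ is rational with $j=n-i-m$, and the target is the rationality of $\rho_{i-1,j}\;(\text{mod}\;2)$), and you have correctly identified the $1$-primordial cycle $\pi$ as the tool and the excluded values of $n-m$ as the codimensions where the Schubert summands of $\pi$ interfere. But the central construction is where the proof actually lives, and yours does not work as stated. The operation you propose --- multiply $\rho_{i,j}\;(\text{mod}\;2)\times[X]$ by a cycle carrying $\pi$ on two coordinates and then \emph{pull back} along a partial diagonal down to $X^i$ --- computes products of Schubert classes inside a coordinate, not a composition of correspondences: there is no push-forward anywhere, hence no degree pairing $\deg(h^{a}\cdot l_{a})=1$, and that pairing is precisely what strips one factor off $\rho_{i,j}$ to produce $\rho_{i-1,j}$. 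Worse, the restriction of $\pi$ to a diagonal kills its essential part mod $2$ (e.g.\ the pull-back of $1\times l_{i_1-1}+l_{i_1-1}\times 1$ along $X\to X^2$ is $2\,l_{i_1-1}=0$), so the construction collapses before it can be used.

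You are also missing the one idea that makes the argument go through: $\pi$ must be twisted by $1\times h^{i_1-i}$ before being used. The paper composes the correspondence $\rho_{i,j}\;(\text{mod}\;2)\colon X\rightsquigarrow X^i$ with $(1\times h^{i_1-i})\cdot\pi\colon X\rightsquigarrow X$. The twist turns $1\times l_{i_1-1}$ into $1\times l_{i-1}$, so that under the transposed action each class $h^s$ with $s\le i-1$ is sent to $0$ unless $s=i-1$, in which case it is sent to $1$; meanwhile the hypothesis $n-m\notin\{i_1\}\cup\{2i_1,\dots,d+1\}$, combined with the fact that the correction terms of $\pi$ start at $k=i_1$ (by \cite[Proposition 73.27]{EKM}), forces $l_j$ to be sent to $0$. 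The composition is then exactly $1\times\bigl(\rho_{i-1,j}\;(\text{mod}\;2)\bigr)$, and a single diagonal pull-back $\delta_i^{\ast}$ finishes. Without the $h^{i_1-i}$ twist, $l_{i_1-1}$ never meets $h^{i-1}$ in a degree-one pairing (since $i_1-1>i-1$), and no amount of symmetrization or diagonal pull-back will recover $\rho_{i-1,j}$. Finally, you explicitly defer the verification that no parasitic terms survive --- which you yourself call the technical heart --- so the proposal remains a plan rather than a proof.
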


\begin{proof}[Proof]
We set $j=n-i-m$. Since $m\in EDI(X,i)$, the cycle  $\rho_{i,j}\;(\text{mod}\;2)$ is rational by Theorem 1.2. 
We claim that the hypothesis  $i_1> i$ implies that the cycle $\rho_{i-1,j}\;(\text{mod}\;2)$ is also rational,
which, by Theorem 1.2, gives the conclusion.

The rational cycle $\pi$
decomposes as 
\[\pi=1\times l_{i_1-1}+l_{i_1-1}\times 1+\sum_{k=i_1}^{d-i_1+1}a_k\left( h^k \times l_{k+i_1-1}
+ l_{k+i_1-1}\times h^k\right)\]
for some $a_k\in \mathbb{Z}/2\mathbb{Z}$. The fact that one can choose to make the previous sum start from $k=i_1$
is due to \cite[Proposition 73.27]{EKM}.
Since $i_1> i$, 
and $j\notin \{i_1-i\}\cup \{2i_1-i,\dots, d-i+1\}$,
one has
\[\left(\rho_{i,j}\;(\text{mod}\;2)\right)\circ \left((1\times h^{i_1-i})\cdot \pi\right)=1\times \left(\rho_{i-1,j}\;(\text{mod}\;2)\right),\]
where $\circ$ stands for the composition of correspondences.
Therefore, pulling back the latter identity with respect to the diagonal morphism $\delta_i$,
one get that the cycle $\rho_{i-1,j}\;(\text{mod}\;2)$ is rational.
\end{proof}

The following statement is obtained by applying recursively Proposition 5.1.

\begin{cor}
\textit{Let $i \in \{1,\dots, d\}$. Suppose that the quadric $X$ is anisotropic with $i_{1}>i$. One has}
\begin{enumerate}[(i)]
\item \textit{if $m\in EDI(X,i)$ then $n-m\geq i_1$;}
\item \textit{if $m\in EDI(X,i)$ and $n-m=i_1+l$ or $d+1+l$ for some $1\leq l <i$ 
then $m+l\in EDI(X,i-l)$.}
\end{enumerate}
\end{cor}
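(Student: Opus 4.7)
The plan is to derive both assertions by iterating Proposition 5.1, decreasing the level by one and incrementing $m$ by one at each step, while carefully verifying that at every stage the value $n - m$ remains outside the excluded set $\{i_1\} \cup \{2i_1, \dots, d+1\}$ and that the hypothesis $i_1 >$ current level is preserved.

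For part (ii), consider first the case $n - m = i_1 + l$ with $1 \leq l < i$. Setting $m_k := m + k$, I would prove by induction on $k$ that $m_k \in EDI(X, i - k)$ for all $0 \leq k \leq l$. At the $k$th step with $0 \leq k < l$, the hypothesis $i_1 > i - k$ is immediate from $i_1 > i$, and the value $n - m_k = i_1 + l - k$ lies strictly between $i_1$ and $2i_1$, since $0 < l - k \leq l < i < i_1$. Hence it avoids the excluded set and Proposition 5.1 yields $m_{k+1} \in EDI(X, i - k - 1)$; after $l$ iterations we reach $m + l \in EDI(X, i - l)$. The case $n - m = d + 1 + l$ is handled identically: at step $k < l$ the value $n - m_k = d + 1 + l - k$ is strictly greater than $d + 1$, and exceeds $i_1$ since $i_1 \leq d$, so it remains outside the excluded set throughout.

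For part (i), I would argue by contradiction: suppose $m \in EDI(X, i)$ with $n - m < i_1$. Then at every step the value $n - m_k = n - m - k$ stays strictly below $i_1$, hence outside the excluded set, and the hypothesis $i_1 > i - k$ is preserved since $i_1 > i$. Iterating Proposition 5.1 exactly $i$ times brings us to $m + i \in EDI(X, 0)$, i.e., to the rationality of $l_{n - m - i}$ on $X_K$, with $n - m - i < i_1 - i \leq i_1 - 1$. This contradicts the classical restriction for anisotropic quadrics that no class $l_j$ with $j < i_1 - 1$ can be rational (Karpenko's theorem on the minimal rational cycle).

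The arithmetic at each iteration is routine; the only delicate point is the combinatorial verification that $n - m_k$ stays strictly inside $(i_1, 2i_1)$ in (ii) and strictly below $i_1$ in (i), which is where the strict inequality $l < i < i_1$ (respectively $n - m < i_1$) is essential. The main obstacle is the closing step of part (i): Proposition 5.1 only descends to level zero, and at that point one must invoke the external rationality restriction on $l_j$-classes of anisotropic quadrics to complete the contradiction.
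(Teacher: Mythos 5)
Your proof is correct and follows exactly the route the paper intends: the paper gives no argument beyond ``apply Proposition 5.1 recursively,'' and your careful bookkeeping that $n-m_k$ stays outside $\{i_1\}\cup\{2i_1,\dots,d+1\}$ at each step is precisely what is needed. One minor remark: for the contradiction closing part (i), you do not need any refined statement about classes $l_j$ with $j<i_1-1$ --- reaching $EDI(X,0)\neq\emptyset$ already means some $l_j$ is rational on $X$ itself, whence $l_0=h^j\cdot l_j$ is rational, which is impossible for an anisotropic quadric by Springer's theorem.
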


\bibliographystyle{acm} 
\bibliography{references}
\end{document}